\newtheorem{thm}{Theorem}[section]
\theoremstyle{definition}
\newtheorem{definition}[thm]{Definition}
\theoremstyle{remark}
\newcommand{\R}{\mathbb{R}}
\newcommand{\QU}{\mathbb{H}}
\newcommand{\QS}{\mathbb{S}^{4n-1}}  
\newcommand{\ii}{\textbf{i}}
\newcommand{\jj}{\textbf{j}}
\newcommand{\kk}{\textbf{k}}
\newcommand{\N}{\mathbb{N}}
\newcommand{\LB}{\Delta_{\mathbb{S}^{4n-1}}}
\newcommand{\Hhm}{\mathcal{H}_{h,m}}
\newcommand{\LLB}{\lambda_{h,m}^{\Delta_{\mathbb{S}^{4n-1}}}}
\newcommand{\LG}{\lambda_{h,m}^{\Gamma}}
\newcommand{\dH}{\dim_{\mathcal{H}}}
\DeclareMathOperator{\W}{WF}
\DeclareMathOperator{\spann}{span}
\DeclareMathOperator{\Char}{Char}
\begin{document}

\title[Dimensional estimates for measures on quaternionic spheres]{Dimensional estimates for measures on quaternionic spheres}

\subjclass[2020]{}
\keywords{Hausdorff dimension, spherical harmonics, quaternionic spheres}

\author{Rami Ayoush}
\address{Institute of Mathematics, Universitreiy of Warsaw, Banacha 2, 02-097, Warsaw, Poland}

\email{r.ayoush@uw.edu.pl}

\author{Michał Wojciechowski}
\address{Institute of Mathematics, Polish Academy of Sciences, \\ Śniadeckich 8, \\ 00-656 Warsaw, Poland}
\email{miwoj@impan.pl}
\thanks{R.A. and M.W. were supported by the National Science Centre, Poland, CEUS programme, project no. 2020/02/Y/ST1/00072.}

\begin{abstract}
In this article we provide lower bounds for the lower Hausdorff dimension of finite measures assuming certain restrictions on their quaternionic spherical harmonics expansion. This estimate is an analog of a result previously obtained by the authors for the complex spheres.
\end{abstract}

\maketitle


\section{Introduction} Let $n\geq 2$ be a fixed integer. We denote by $\QS \subset \QU^{n} \simeq \R^{4n}$ the quaternionic unit sphere, i.e.
\[
\QS = \{x \in \QU^{n}: \lVert x \rVert = 1\},
\]
where the norm comes from the quaternionic inner product:
\[
\langle x,y \rangle \quad = \sum_{i=1}^{n} x_{i}\bar{y_{i}} \quad \text{ for } x=(x_{1},\dots,x_{n}), \ y=(y_{1},\dots,y_{n}) \in \QU^{n}.
\]
Here, the conjugation of a quaternion
\[
x = a+b\ii+c\jj+d\kk, \quad a,b,c,d\in \R
\]
is given by
\[
x = a-b\ii-c\jj-d\kk.
\]
We also denote $\Re x = a.$

Our results will be stated in terms of the spectral decomposition of $L^{2}(\QS)$ with respect to the Laplace-Beltrami operator on $\QS$, $\LB$, and a sublaplacian
\[
\Gamma = -(T^{2}_{i}+T^{2}_{j}+T^{2}_{k}),
\]
where 
\[
T_{i} f(x) = \frac{d}{dt}\bigg{|}_{t=0} f(exp(-\ii t)x),
\]
and $T_{j}, T_{k}$ are defined analogously, with a use of remaining imaginary parts. 
This decomposition was obtained in \cite{ACMM} in the following theorem (see Proposition 2.1. and Proposition 3.1. therein).
\begin{definition}
	Let us denote 
	\[
	I_{\QU} = \{(h,m) \in \N^{2}: 2m \leq h\}.
	\]
\end{definition}
\begin{thm}[\cite{ACMM}]
	There exist finite dimensional, pairwise orthogonal spaces $\Hhm \subset L^{2}(\QS)$, $(h,m) \in I_{\QU}$, such that
	\[
	L^{2}(\QS) = \widehat{\bigoplus_{(h,m)\in I_{\QU}}} \Hhm
	\]
	and $\Hhm$ are eigenspaces of $\LB$ and $\Gamma$ with eigenvalues 
	\[
	\LLB = h(h+4n-2), \quad \LG = (h-2m)(h-2m+2),
	\]
	respectively.
	
	Moreover, the integral kernel of the orthogonal projection $\pi_{h,m}: L^{2}(\QS) \to \Hhm$ is given by
	\[
	K_{h,m}(x,y) = \frac{(h-2m+1)(h+2m-1)}{(2n-2)(2n-1)}\ {h-m+2n-2\choose 2n-3} \ |\langle x,y \rangle|^{h-2m}
	\]
	\[
	\times J_{m}^{(2n-3,h-2m+1)}(|\langle x,y \rangle|^{2}) U_{h-2m}\Big{(}\frac{\Re \langle x,y \rangle}{|\langle x,y \rangle|}\Big{)},
	\]
	where $J_{m}^{(2n-3,h-2m+1)}$ is the Jacobi polynomial and $U_{h-2m}$ is a suitable Chebyshev polynomial of the second kind. 
\end{thm}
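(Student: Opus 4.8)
The plan is to read off the decomposition and the eigenvalues from the representation theory of the symmetry group, and then to obtain the reproducing kernels by separation of variables. I would start from the classical decomposition $L^{2}(\QS)=\widehat{\bigoplus_{h\geq 0}}\HH_{h}$, where $\HH_{h}$ is the space of restrictions to $\QS$ of harmonic homogeneous polynomials of degree $h$ on $\R^{4n}$: these are pairwise orthogonal, span $L^{2}$, are $\mathrm{O}(4n)$-irreducible, and are the eigenspaces of $\LB$ with eigenvalue $h(h+4n-2)$. The fields $T_{i},T_{j},T_{k}$ span the Lie algebra of the diagonal copy of $\mathrm{Sp}(1)$ inside $\mathrm{Sp}(n)\subset\mathrm{O}(4n)$ acting on $\QU^{n}$ by left scalar multiplication, so $\Gamma=-(T_{i}^{2}+T_{j}^{2}+T_{k}^{2})$ is the Laplacian along the orbits of this $\mathrm{Sp}(1)$-action (each orbit a round $S^{3}$) and commutes with $\LB$; via quaternionic conjugation of the coordinates this $\mathrm{Sp}(1)$ is conjugate in $\mathrm{O}(4n)$ to the $\mathrm{Sp}(1)$-factor of the classical dual pair $(\mathrm{Sp}(n),\mathrm{Sp}(1))$. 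Decomposing each $\HH_{h}$ into $\mathrm{Sp}(1)$-isotypic components therefore diagonalises $\Gamma$, and I would set $\Hhm$ to be the component on which $\mathrm{Sp}(1)$ acts through its $(h-2m+1)$-dimensional irreducible representation. The classical branching of harmonic polynomials under $(\mathrm{Sp}(n),\mathrm{Sp}(1))$ then gives the rest: precisely the values $m$ with $0\leq 2m\leq h$, i.e. $(h,m)\in I_{\QU}$, occur, each with multiplicity one (so $\Hhm$ is a single $\mathrm{Sp}(n)\times\mathrm{Sp}(1)$-isotypic block), and $\Gamma$ acts on $\Hhm$ by the $S^{3}$-Laplacian eigenvalue $(h-2m)(h-2m+2)$ on degree-$(h-2m)$ spherical harmonics. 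Pairwise orthogonality and completeness are inherited from those of the $\HH_{h}$ and of distinct isotypic components.

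For the kernel, I would first note that $K_{h,m}(gx,gy)=K_{h,m}(x,y)$ for $g\in\mathrm{Sp}(n)$, because $\Hhm$ is $\mathrm{Sp}(n)$-invariant; since $\mathrm{Sp}(n)$ acts transitively on $\QS$ with isotropy $\mathrm{Sp}(n-1)$ and contains the left scalar multiplications, the $\mathrm{Sp}(n)$-orbit of a pair $(x,y)$ is determined by the conjugacy class of $\langle x,y\rangle$ in $\QU$, i.e. by the pair $(|\langle x,y\rangle|,\,\Re\langle x,y\rangle)$. Hence $y\mapsto K_{h,m}(x,y)$ is, up to a scalar, the unique joint eigenfunction of $\LB$ and $\Gamma$ with eigenvalues $\LLB,\LG$ that depends only on these two invariants and is smooth at $x$.

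To exhibit this eigenfunction I would fix $x$, write $y=(y_{1},y')\in\QU\times\QU^{n-1}$ with $|y_{1}|=\cos\phi$, and use that $\QS$ is the join of the unit sphere $S^{3}\subset\QU$ and the unit sphere $S^{4n-5}\subset\QU^{n-1}$, with metric $d\phi^{2}+\cos^{2}\phi\,g_{S^{3}}+\sin^{2}\phi\,g_{S^{4n-5}}$; correspondingly $\LB$ is a Sturm--Liouville operator in $\phi$ with weight $(\cos\phi)^{3}(\sin\phi)^{4n-5}$, plus $(\cos\phi)^{-2}\Delta_{S^{3}}$, plus $(\sin\phi)^{-2}\Delta_{S^{4n-5}}$. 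A function of the two invariants depends only on $\phi$ and on the angle $\psi$ determined by $\Re y_{1}=|y_{1}|\cos\psi$; on such functions $\Gamma$ reduces to the radial $S^{3}$-Laplacian in $\psi$, whose eigenfunction with eigenvalue $(h-2m)(h-2m+2)$ is the Chebyshev polynomial $U_{h-2m}(\cos\psi)$. Writing the remaining factor as $(\cos\phi)^{h-2m}$ times a function of $u=\sin^{2}\phi=1-|\langle x,y\rangle|^{2}$ and imposing the $\LB$-eigenvalue $h(h+4n-2)$ turns the $\phi$-equation into a Jacobi equation with weight $u^{2n-3}(1-u)^{h-2m+1}$, whose polynomial solution of degree $m$ is (a constant multiple of) $J_{m}^{(2n-3,h-2m+1)}(|\langle x,y\rangle|^{2})$; admissibility of these parameters is exactly the condition $h-2m\geq 0$, i.e. $(h,m)\in I_{\QU}$. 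The product of the three factors is $K_{h,m}(x,y)$ up to one multiplicative constant, which I would fix by evaluating at $x=y$: $K_{h,m}(x,x)$ is constant in $x$ and is a fixed multiple (depending on the normalisation of the measure) of $\dim\Hhm$, and $\dim\Hhm$ equals $(h-2m+1)$ times the dimension of its $\mathrm{Sp}(n)$-irreducible component, the latter being the value of the Weyl dimension formula which produces the factor $\binom{h-m+2n-2}{2n-3}$; combined with $U_{h-2m}(1)=h-2m+1$ and the endpoint value of the Jacobi polynomial this yields the stated prefactor. (One may also bypass the differential equation by viewing $\Hhm$ as the space of harmonics transforming in the fixed $\mathrm{Sp}(1)$-type $h-2m$ and factoring $K_{h,m}$ into an $\mathrm{Sp}(1)$-spherical function, giving the Chebyshev factor, times a zonal spherical function on $\mathbb{HP}^{n-1}$, giving the Jacobi factor.)

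The decomposition and the reduction to two invariants are routine; I expect the main obstacle to be the explicit calculation in the last step: choosing coordinates in which $\LB$ and $\Gamma$ separate cleanly and recognising the exact Jacobi parameters $(2n-3,\,h-2m+1)$, and then matching the normalising constant $\frac{(h-2m+1)(h+2m-1)}{(2n-2)(2n-1)}\binom{h-m+2n-2}{2n-3}$, which requires the precise dimension of $\Hhm$ together with the endpoint and special values of the Jacobi and Chebyshev polynomials.
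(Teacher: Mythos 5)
This theorem is imported verbatim from \cite{ACMM} (Propositions 2.1 and 3.1 there), and the paper under review supplies no proof of its own to compare against. Your sketch --- decomposing the degree-$h$ spherical harmonics on $\R^{4n}$ under the dual pair $(\mathrm{Sp}(n),\mathrm{Sp}(1))$, identifying $\Gamma$ with the Casimir of the scalar $\mathrm{Sp}(1)$-action so that the isotypic components $\Hhm$ diagonalise it with eigenvalue $(h-2m)(h-2m+2)$, and obtaining $K_{h,m}$ as the product of an $\mathrm{Sp}(1)$-spherical function (the Chebyshev factor) with a zonal Jacobi factor, normalised via $K_{h,m}(x,x)=\dim\Hhm/|\QS|$ --- is essentially the argument of \cite{ACMM}, and is correct modulo the normalisation and convention bookkeeping (e.g.\ the placement of the scalar $\mathrm{Sp}(1)$ relative to $\mathrm{Sp}(n)$, and the exact Jacobi parameters) that you already flag as the delicate points.
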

Since $K_{h,n}$ is a continuous function, $\pi_{h,m}$ can be extended to a projection from the space of finite measures.

Our main result is the following:
\begin{definition}
	For $\epsilon > 0$ let us denote by $C(\epsilon)$ the set
	\[
	C(\epsilon) = \bigg{\{}(h,m) \subset I_{\QU}: \Big{|}\frac{m}{h} - \frac{1}{2}\Big{|} < \epsilon \bigg{\}}.
	\]
\end{definition}
In the definition above we assume that zero is a natural number.
\begin{definition}
For a finite measure $\mu \in M(\QS)$ we denote by
\[
\dH(\mu) = \inf \{\dH(F): \mu(F) \neq 0\}
\]
the lower Hausdorff dimension of $\mu$.
\end{definition}
\begin{thm}\label{mainres}
	Let $\mu \in M(\QS)$ be a finite measure. Suppose that for some $\epsilon > 0$ the set
	\begin{equation}\label{assumption}
		C(\epsilon) \cap \{(h,m): \pi_{h,m} \mu \not\equiv 0\}
	\end{equation}
	is finite. Then
	\[
	\dH(\mu) \geq 4n - 4.
	\]
\end{thm}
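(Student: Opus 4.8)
The plan is to follow the strategy that must have worked for the complex spheres: translate the hypothesis on the spherical harmonic expansion into a statement about the wavefront set (or at least about microlocal smoothness) of $\mu$ in a well-chosen direction, and then invoke a dimension lower bound of ``Frostman type'' that exploits this extra regularity. Concretely, I would first fix a point $x_0 \in \QS$ and a tangent direction and analyze the operator $\sum_{(h,m) \in C(\epsilon)} \pi_{h,m}$ together with its complement. The key analytic input is the explicit kernel $K_{h,m}(x,y)$ from the cited theorem: the factor $|\langle x,y\rangle|^{h-2m} U_{h-2m}\big(\Re\langle x,y\rangle/|\langle x,y\rangle|\big)$ encodes oscillation in the ``Reeb'' directions generated by $T_i,T_j,T_k$, while the Jacobi polynomial factor $J_m^{(2n-3,h-2m+1)}(|\langle x,y\rangle|^2)$ governs the behaviour transverse to the orbits of the $Sp(1)$-action. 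When $m/h$ is bounded away from $1/2$, i.e. $h-2m$ is comparable to $h$, the kernel behaves (after summing in $h$) like a projection onto high frequencies in a $(4n-4)$-dimensional ``base'' direction, so one expects a gain of regularity there.

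The second step is to set up the dimension estimate itself. I would use the characterization of $\dH(\mu)$ via energies: it suffices to show that for every $s < 4n-4$ the $s$-energy $\iint |x-y|^{-s}\, d\mu(x)\,d\mu(y)$ is finite, or equivalently (and more robustly) to show a decay estimate of the form $\mu(B(x,r)) \lesssim r^{4n-4-\delta}$ off a $\mu$-null set, for every $\delta>0$. The mechanism is the one from the authors' earlier complex-sphere paper: decompose $\mu = \mu_{\mathrm{good}} + \mu_{\mathrm{bad}}$ where $\mu_{\mathrm{bad}} = \sum \pi_{h,m}\mu$ over the finite set in \eqref{assumption} — this part is a finite sum of real-analytic functions times surface measure, hence has full dimension $4n-1$ — and $\mu_{\mathrm{good}}$ is supported (in frequency) on $I_{\QU}\setminus C(\epsilon)$. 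For $\mu_{\mathrm{good}}$ one runs a Littlewood--Paley / dyadic decomposition in $h$ and uses pointwise bounds on the blocks $K_{h,m}$, integrated against $\mu$, to get square-function estimates. The dimension $4n-4$ should emerge as $4n-1$ minus $3$, the $3$ being the dimension of the fibres of the quaternionic Hopf fibration $\QS \to \mathbb{HP}^{n-1}$ (the $S^3$-orbits of $Sp(1)$), along which the extra oscillation lives; this parallels the complex case where the loss was $1$ (the circle fibres of $\mathbb{S}^{2n-1}\to\mathbb{CP}^{n-1}$).

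More precisely, I would try to prove an $L^2$ or $L^p$ smoothing estimate: the operator $\sum_{(h,m)\notin C(\epsilon)} \pi_{h,m}$, when localized to frequency $h \sim 2^j$, maps measures to functions with a quantitative $L^2$-norm bound growing polynomially in $2^j$ but with a $3$-dimensional gain compared to the trivial bound coming from the full sphere. Summing the resulting Bessel-type or Strichartz-type inequality over dyadic $j$ and using orthogonality of the $\Hhm$, one concludes that $\mu_{\mathrm{good}}$ cannot concentrate on sets of dimension below $4n-4$. The cleanest route is probably to reduce, via the $Sp(1)$-equivariance and the product structure of $K_{h,m}$ (Jacobi times Chebyshev), to a one-dimensional oscillatory estimate for the Chebyshev factor combined with a standard estimate for Jacobi polynomial kernels (their $L^p$ mapping properties are classical, e.g.\ Hörmander-type multiplier bounds on rank-one symmetric spaces), and then to assemble these into the spherical estimate by integrating over the fibres.

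The main obstacle I anticipate is the uniform control of the mixed kernel $K_{h,m}$ in the regime where $m$ and $h-2m$ are both large but $m/h$ stays away from $1/2$: one needs asymptotics for $J_m^{(2n-3,h-2m+1)}$ with both the degree and the parameters growing, and these must be good enough, after summing over all admissible $m$ for a given dyadic block in $h$, to beat the number of such $m$ (which is $\sim h$). Getting the bookkeeping right — in particular making sure the gain is genuinely $3$-dimensional and not eaten up by the multiplicity of the $m$-sum — will be where the real work lies; the rest is a matter of transcribing the complex-case argument with $\mathbb{C}$ replaced by $\mathbb{H}$ and circle fibres replaced by $3$-spheres.
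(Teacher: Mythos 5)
Your opening sentence points at the right idea, but the route you then commit to --- explicit kernel asymptotics, dyadic Littlewood--Paley blocks, and an energy/Frostman bound --- is not the paper's argument and contains a gap I do not think can be closed. The quantitative targets you set (finiteness of the $s$-energy $\iint|x-y|^{-s}\,d\mu\,d\mu$ for all $s<4n-4$, or a bound $\mu(B(x,r))\lesssim r^{4n-4-\delta}$ off a null set) are strictly stronger than the assertion $\dH(\mu)\ge 4n-4$, while the hypothesis of the theorem is purely qualitative: it says that $\pi_{h,m}\mu$ \emph{vanishes} for all but finitely many $(h,m)\in C(\epsilon)$ and gives no decay whatsoever on $\lVert\pi_{h,m}\mu\rVert_{2}$ for $(h,m)\notin C(\epsilon)$. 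For a general finite measure these norms can grow polynomially in $h$ with no square-summability, so the Bessel/square-function summation over dyadic blocks in $h$ cannot converge; already an absolutely continuous measure with an $L^{1}$ (but not $L^{2}$) density has full lower Hausdorff dimension while its $s$-energy may diverge for $s$ far below the ambient dimension. In short, the energy method proves the wrong (too strong) intermediate statement, and no amount of uniform asymptotics for $J_{m}^{(2n-3,h-2m+1)}$ --- the difficulty you flag as ``the real work'' --- will repair this. There is also a sign slip in your heuristic: for $(h,m)\notin C(\epsilon)$ one has $h-2m\gtrsim\epsilon h$, i.e.\ the surviving frequencies carry a \emph{definite vertical component} along the $3$-dimensional $Sp(1)$-orbits; the correct conclusion is that $\mu$ is microlocally smooth in the horizontal, $(4n-4)$-dimensional, codirections, not that it is ``projected onto high frequencies in the base direction.''

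The paper's proof is entirely soft and bypasses every estimate you anticipate. One introduces the first-order operators $L_{1}=\sqrt{\LB+(2n-1)^{2}Id}-(2n-1)Id$ and $L_{2}=\sqrt{Id+\Gamma}-Id$, with eigenvalues $h$ and $h-2m$ on $\Hhm$ and principal symbols $|\xi|$ and $|\xi_{2}|$ (the vertical component). Strichartz's functional calculus turns the spectral multiplier $L_{3}=\sum_{(h,m)}\psi(h,m)\pi_{h,m}$, for $\psi$ homogeneous of degree $0$, supported in $C(\epsilon)$ and $\equiv 1$ on $C(\epsilon/2)$, into a pseudodifferential operator of order $0$ with principal symbol $\psi\big(|\xi|,(|\xi|-|\xi_{2}|)/2\big)$. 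The hypothesis makes $L_{3}\mu$ a finite sum of smooth functions, so microlocal elliptic regularity gives $\W(\mu)\subset\Char(L_{3})\subset\{|\xi_{2}|\ge c_{\epsilon}|\xi|\}$, a set with a $(4n-4)$-dimensional gap along the horizontal subspace; Theorem~1.5 of \cite{AWmicro} then yields $\dH(\mu)\ge 4n-4$ directly, with no Frostman or energy input. If you want to salvage your plan, replace your second and third steps by that gap theorem: it is exactly the ``dimension lower bound exploiting microlocal smoothness'' that your first sentence calls for, and it is the only step where the dimension actually gets counted.
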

The theorem above is proved by using microlocal techniques in combination with properties of some special sets from Harmonic Analysis of measures, namely Riesz sets (see \cite{Mey}) and $s$-Riesz sets (see \cite{RW}). Such an approach to the regularity of measures was first applied in \cite{Bru} in the study of absolute continuity and then developed for the purpose of Hausdorff dimension estimates in \cite{AWmicro}. Results from \cite{AWmicro} generalize classical results concerning regularity of pluriharmonic measures due to Aleksandrov and Forelli (see Theorem 3.1.2. in \cite{Ale} and Corollary 1.11. in \cite{For}).
\section{Microlocal toolbox} Let $\nu \in \mathcal{D}'(\R^{n})$ be a distribution. We define the wave front set of $\nu$ as
\[
\W(\nu) = \{(x,\xi) \in \R^{n}\times \R^{n}\setminus\{0\}: \xi \in \W_{x}(\nu)\}
\]
and 
\[
\W_{x}(\nu) = \bigcap_{\phi \in C^{\infty}_{0}(\R^{n}), \phi(x) \neq 0} \Sigma(\phi \nu),
\]
where for any compactly supported distribution $\mu$, the set $\Sigma(\mu)$, is the complement (in $\R^{n}\setminus\{0\}$) of
\begin{multline*}
	\{\xi \in \R^{n}\setminus\{0\}: \text{there exists a conic neighbourhood of } \xi,~ C_{\xi}, \text{ such that } \\ \forall N>0 ~ \exists ~C_{N} \text{ such that} ~ |\widehat{\mu}(\xi')| \leq C_{N}(1+|\xi'|)^{-N} \quad \forall ~ \xi' \in C_{\xi}\}.
\end{multline*} 
Wave front set can be also defined on cotangent bundles of abstract manifolds by using coordinate charts (see Theorem 8.2.4. in \cite{Hor1} and comments following it). In particular, since the coordinate charts can be chosen to be bilipschitz (and thus preserving the Hausdorff dimension), in our case of quaternionic sphere most of the properties of wave fronts can be just verified on $\R^{4n-1}$.

It turns out that in the case when the distribution is a Radon measure, the size and the shape of a wave front set give some information about its lower Hausdorff dimension.
\begin{definition}[\cite{AWmicro}]
Let $V\subset \R^{n}$ be a $k$-dimensional subspace. We say that a set $F \subset \R^{n}$ has a $k$-dimensional gap given by $V$ when for each $a\in \R^{n}$
\[
(V+a)\cap F \text{ is a bounded set.}
\]
\end{definition}
\begin{thm}[\cite{AWmicro}, Theorem 1.5]\label{AW} Let $\mu \in M(\R^{n})$ be a finite measure. If the condition
\[
\W_{x}(\mu) \text{ has a $k$-dimensional gap}  
\]
is fulfilled at $\mu$-almost every $x$, then
\[
\dH(\mu) \geq k.
\]
\end{thm}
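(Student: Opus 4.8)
The plan is to argue by contradiction and to convert the microlocal gap hypothesis into an absolute-continuity statement for a projection of $\mu$. By the definition of $\dH(\mu)$, the conclusion $\dH(\mu)\ge k$ is equivalent to the assertion that $\mu(E)=0$ for every Borel set $E\subset\R^{n}$ with $\dH(E)<k$. So I would suppose, for contradiction, that there is a Borel set $E$ with $\dH(E)=s<k$ and $\mu(E)>0$; by inner regularity I may replace $E$ by a compact $K\subset E$ with $\mu(K)>0$ and $\dH(K)\le s<k$, and set $\nu=\mu\llcorner K$, a nonzero finite measure with $\nu\le\mu$.

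Next I would fix a good base point. Since the wave-front gap holds at $\mu$-almost every $x$ and $\nu\le\mu$, it holds at $\nu$-almost every $x$, so I can pick $x_{0}\in\supp\nu$ at which the gap is realized by a $k$-dimensional subspace $V$. A short argument using that $\W_{x_{0}}(\mu)$ is a closed cone shows that the gap for $\W_{x_{0}}(\mu)$ given by $V$ is equivalent to $V\cap\W_{x_{0}}(\mu)=\{0\}$, i.e. $(x_{0},\xi)\notin\W(\mu)$ for every nonzero $\xi\in V$. The microlocal heart of the proof is then to extract, from the characterisation $\W_{x_{0}}(\mu)=\bigcap_{\phi(x_{0})\ne0}\Sigma(\phi\mu)$, the compactness of $V\cap\mathbb{S}^{n-1}$, and the stability of rapid-decay cones under multiplication by further bump functions, a single cutoff $\phi\in C^{\infty}_{0}(\R^{n})$ with $\phi\ge0$, $\phi(x_{0})>0$, such that $\widehat{\phi\mu}$ decays faster than any polynomial on a fixed conic neighbourhood $\Gamma$ of $V\setminus\{0\}$. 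Writing $\pi_{V}\colon\R^{n}\to V\cong\R^{k}$ for the orthogonal projection, the key observation is the restriction identity
\[
\widehat{(\pi_{V})_{*}(\phi\mu)}(\eta)=\int_{\R^{n}}e^{-i\,\pi_{V}(x)\cdot\eta}\,\phi(x)\,\dm\mu(x)=\int_{\R^{n}}e^{-i\,x\cdot\eta}\phi(x)\,\dm\mu(x)=\widehat{\phi\mu}(\eta),\qquad \eta\in V,
\]
valid because $\pi_{V}$ is orthogonal and $\eta\in V$. Since $V\subset\Gamma$, the Fourier transform of the pushforward $(\pi_{V})_{*}(\phi\mu)$ decays rapidly on $\R^{k}$, so this pushforward is a smooth function; in particular it is absolutely continuous with respect to $\mathcal{L}^{k}$.

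To reach the contradiction I would push the low-dimensional part through the same projection. As $0\le\phi\nu\le\phi\mu$, the measure $(\pi_{V})_{*}(\phi\nu)$ is dominated by the absolutely continuous measure $(\pi_{V})_{*}(\phi\mu)$, hence is itself absolutely continuous; it is also nonzero, its total mass being $\int_{K}\phi\,\dm\mu>0$ because $\phi\ge0$, $\phi(x_{0})>0$ and $x_{0}\in\supp\nu$. On the other hand $(\pi_{V})_{*}(\phi\nu)$ is carried by $\pi_{V}(K)$, and since $\pi_{V}$ is Lipschitz we have $\dH(\pi_{V}(K))\le\dH(K)<k$, so $\pi_{V}(K)$ is $\mathcal{L}^{k}$-null. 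An absolutely continuous measure cannot charge an $\mathcal{L}^{k}$-null set, a contradiction. Hence no such $E$ exists and $\dH(\mu)\ge k$.

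The main obstacle is the extraction of the single cutoff $\phi$ in the second paragraph: the hypothesis only provides, for each individual direction $\xi\in V$, some localizer making $\widehat{\phi\mu}$ decay near $\xi$, whereas I need one localizer yielding uniform rapid decay on an entire conic neighbourhood of $V$. This requires covering the compact sphere $V\cap\mathbb{S}^{n-1}$ by finitely many cones carrying suitable localizers $\phi_{j}$ and then verifying that a common product cutoff preserves rapid decay on a slightly smaller cone — a standard but technically delicate feature of the wave-front calculus. The remaining ingredients (analytic measurability of $\pi_{V}(K)$, the fact that $\dH<k$ forces $\mathcal{L}^{k}$-nullity, and that absolute continuity is inherited by dominated measures) are routine.
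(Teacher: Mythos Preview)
The paper does not supply a proof of this theorem: it is quoted, with attribution to \cite{AWmicro} (Theorem~1.5 there), and then used as a black box in the proof of Theorem~\ref{mainres}. There is consequently no argument in the present paper to compare your attempt against.

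Evaluated on its own terms, your outline is a sound route to the result for nonnegative $\mu$ (the natural setting for $\dH$). The reduction of the gap condition on the closed cone $\W_{x_{0}}(\mu)$ to $V\cap\W_{x_{0}}(\mu)=\emptyset$ is correct (an unbounded sequence in $(V+a)\cap\W_{x_{0}}(\mu)$, normalised, would subconverge to a unit vector in $V\cap\W_{x_{0}}(\mu)$); the identity $\widehat{(\pi_{V})_{*}(\phi\mu)}(\eta)=\widehat{\phi\mu}(\eta)$ for $\eta\in V$ is exactly the restriction of the Fourier transform to $V$; and the domination/null-set contradiction goes through because $\pi_{V}(K)$ is compact with $\dH(\pi_{V}(K))\le\dH(K)<k$. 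You have also correctly isolated the only genuinely delicate step --- producing a \emph{single} cutoff $\phi$ with rapid decay of $\widehat{\phi\mu}$ on a full conic neighbourhood of $V$ --- and the remedy you describe (finite cover of $V\cap\mathbb{S}^{n-1}$ by rapid-decay cones, then the standard stability of $\Sigma(\cdot)$ under multiplication by compactly supported smooth functions) is the right one. One small caveat: the inequality $0\le\phi\nu\le\phi\mu$ tacitly assumes $\mu\ge 0$; for signed or complex $\mu$ this domination step, and the conclusion $\int_{K}\phi\,\dm\mu>0$, would require additional justification.
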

We will need two more results from microlocal analysis. The first is 

\begin{thm}\label{mic}
Suppose that $A$ is an $s$-th order, properly supported pseudodifferential operator on a compact manifold $M$ and  $\nu \in \mathcal{D}'(M)$. Then
\begin{equation} \label{elliptic}
	\W(\nu) = \W(A\nu) \cup \Char(A),
\end{equation}
where 
\[
\Char(A) = \{(x,\xi) \in T^{*}M\setminus\{0\}: \sigma(A)(x,\xi)=0\}
\]
and $\sigma(A)$ stands for the principal symbol of $A$.
\end{thm}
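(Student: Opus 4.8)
Equation \eqref{elliptic} is a form of microlocal elliptic regularity, and the plan is to prove it through its two inclusions, the substantive one resting on a microlocal parametrix for $A$. First I would pass to the local model: covering $M$ by a finite atlas of the bilipschitz coordinate charts mentioned above, it suffices to treat the case of open subsets of $\R^{4n-1}$ with $A$ a properly supported pseudodifferential operator of order $s$ and $\nu\in\mathcal{D}'$ there, since $\W$, $\Char(A)$ and the principal symbol all transform correctly under changes of coordinates and respect an auxiliary partition of unity. The easy inclusion $\W(A\nu)\subseteq\W(\nu)$ is then pseudolocality of pseudodifferential operators: the operator wave front set of $A$ lies in the diagonal of $(T^{*}M\setminus 0)^{2}$, so $A$ cannot enlarge the wave front set. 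Concretely, if $(x_{0},\xi_{0})\notin\W(\nu)$ and $\phi\in C_{0}^{\infty}$ equals $1$ near $x_{0}$ with $\widehat{\phi\nu}$ rapidly decreasing on a conic neighbourhood of $\xi_{0}$, one splits $A\nu=A(\phi\nu)+A((1-\phi)\nu)$ and uses that $A$ has order $s$ and kernel smooth off the diagonal to conclude that $A\nu$ inherits the rapid-decrease property near $(x_{0},\xi_{0})$.

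For the main inclusion $\W(\nu)\subseteq\W(A\nu)\cup\Char(A)$, fix $(x_{0},\xi_{0})\notin\Char(A)\cup\W(A\nu)$. Since $\sigma(A)(x_{0},\xi_{0})\neq 0$ and $\sigma(A)$ is homogeneous of degree $s$, the symbol of $A$ is elliptic (bounded below after the obvious normalisation) on an open conic neighbourhood $\Gamma$ of $(x_{0},\xi_{0})$. I would then construct a microlocal parametrix: a properly supported pseudodifferential operator $B$ of order $-s$ whose principal symbol is $1/\sigma(A)$ on a slightly smaller cone, obtained by the usual asymptotic summation in the symbol calculus (invert the principal symbol, then correct the lower-order terms by a Neumann-type iteration), so that $BA=\mathrm{Id}+R$ with $R$ a pseudodifferential operator that is microlocally smoothing near $(x_{0},\xi_{0})$; in particular $(x_{0},\xi_{0})\notin\W(R\nu)$. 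From the identity $\nu=B(A\nu)-R\nu$ we then get $(x_{0},\xi_{0})\notin\W(B(A\nu))$ — because $(x_{0},\xi_{0})\notin\W(A\nu)$ and $B$, being a pseudodifferential operator, is microlocal, which is the easy inclusion applied to $B$ — and $(x_{0},\xi_{0})\notin\W(R\nu)$ by the choice of $R$; hence $(x_{0},\xi_{0})\notin\W(\nu)$.

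Combining the two inclusions yields \eqref{elliptic}. I expect the only delicate point to be the parametrix construction: one must arrange the error term $R$ to be regularising precisely on a conic neighbourhood of the single point $(x_{0},\xi_{0})$, not merely off the whole of $\Char(A)$, and keep track of the properly-supported reductions so that every composition above is well defined on $\mathcal{D}'(M)$. All of this is classical, and for the statement at noncharacteristic points one may also simply invoke the corresponding result in \cite{Hor1}.
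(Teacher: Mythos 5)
The paper offers no proof of this statement at all --- it simply cites Theorem 18.1.28 of \cite{Hor3} --- so your argument is to be judged as a proof of the cited result, and as such it is the standard one and is sound: microlocality of properly supported pseudodifferential operators (the splitting $A\nu=A(\phi\nu)+A((1-\phi)\nu)$ together with smoothness of the kernel off the diagonal) gives $\W(A\nu)\subseteq\W(\nu)$, and the microlocal parametrix $B$ with $BA=\mathrm{Id}+R$, $R$ smoothing on a conic neighbourhood of the fixed noncharacteristic point, gives $\W(\nu)\subseteq\W(A\nu)\cup\Char(A)$. One point needs correcting, though it concerns the statement more than your argument: your last step claims that the two inclusions ``combine'' to yield the equality \eqref{elliptic}, but they do not, and the equality is in fact false as written --- take $\nu\in C^{\infty}(M)$ and any $A$ with $\Char(A)\neq\emptyset$ (e.g.\ the sublaplacian $\Gamma$ used later in the paper): the left-hand side is empty while the right-hand side is not. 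What you have proved, and what \cite{Hor3} actually asserts, is the double inclusion $\W(A\nu)\subseteq\W(\nu)\subseteq\W(A\nu)\cup\Char(A)$. Since the paper only invokes the second inclusion (to deduce $\W\mu\subset\Char(L_{3})$ from $L_{3}\mu\in C^{\infty}(\QS)$), nothing downstream is affected, but you should state your conclusion as the double inclusion rather than as the equality \eqref{elliptic}.
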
 

For the details and proof see \cite{Hor3}, Theorem 18.1.28.

The second is a theorem of Strichartz concerning functional calculus of pseudodifferential operators.

\begin{thm}[\cite{Stri}]\label{functional} If $L_{1}, \dots, L_{m}$ are first order pseudodifferential operators on a smooth manifold $M$ such that
	\begin{itemize}
	\item 
	\[ \int_{M} L_{j}u(x) \overline{v(x)}dx =  \int_{M} u(x) \overline{L_{j}v(x)}dx\]
	for every $j$ and all $u,v \in C^{\infty}(M)$, \\
	\item \[ L_{i}L_{j} = L_{j}L_{i} \quad \text{for all }i,j\]\\
	\item $Q := L^{2}_{1}+\dots+L^{2}_{m}$ is eliptic in the sense that in each coordinates its symbol satisfies
	\[
	q(x,\xi) \geq c|\xi|
	\]
	for some constant $c>0$.
\end{itemize}
	Moreover, suppose that for $m\in C^{\infty}(M)$, for some fixed $b\geq 0$ we have
	\[
	|(\frac{\partial}{\partial x})^{\alpha}m(x)| \leq C_{\alpha}(1+\sum_{j=1}^{m}x_{j}^{2})^{\frac{1}{2}(b-\sum \alpha_{i})}
	\]
	for all multiindices $\alpha$ and some constants $C_{\alpha}$.
	Then, $m(L_{1},\dots,L_{m})$ is a pseudodifferential operator of order $b$ with a principal symbol \\ $m(\sigma(L_{1}),\dots,\sigma(L_{m}))$.
\end{thm}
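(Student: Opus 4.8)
The plan is to realize $m(L_1,\dots,L_m)$ through the joint functional calculus of the commuting family and to analyze it via the unitary group it generates, following Strichartz; throughout I abbreviate $m(L)=m(L_1,\dots,L_m)$ and $\sigma(L)=(\sigma(L_1),\dots,\sigma(L_m))$. First I would upgrade the hypotheses to an honest self-adjoint framework: the symmetry condition makes each $L_j$ symmetric on $C^{\infty}(M)$, while ellipticity of $Q=\sum_j L_j^2$ lets Nelson's commutation theorem apply, so the closures $\overline{L_j}$ are self-adjoint, strongly commuting, and admit a joint spectral measure $E$ on $\R^m$ supported on the joint spectrum. One then \emph{defines}
\[
m(L)=\int_{\R^m} m(\lambda)\,dE(\lambda),
\]
and recognizes that the growth hypothesis on $m$ says exactly that $m$ is a classical symbol of order $b$ on $\R^m$. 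Fourier inversion (valid weakly, $\widehat m$ being a tempered distribution) converts this into a group average
\[
m(L)=(2\pi)^{-m}\int_{\R^m}\widehat m(\xi)\,U(\xi)\,d\xi,\qquad U(\xi)=\exp\!\Big(i\textstyle\sum_j\xi_j\overline{L_j}\Big),
\]
where $U(\xi)=\int e^{i\langle\xi,\lambda\rangle}dE(\lambda)$ is a strongly continuous unitary group.

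Next I would localize the $\xi$-integral near the origin. A standard symbol lemma gives that $\widehat m$ has singular support $\{0\}$ and is $C^{\infty}$ and rapidly decreasing away from $0$ (use $\xi^{\alpha}\widehat m=\widehat{D^{\alpha}m}$ with $D^{\alpha}m\in S^{b-|\alpha|}\subset L^{1}$ once $|\alpha|>b+m$). Choosing a cutoff $\chi$ equal to $1$ near $0$, I split $m=m_0+m_1$ with $\widehat{m_0}=\chi\widehat m$ compactly supported and $m_1\in\Schw(\R^m)$. The Schwartz piece contributes a smoothing operator: since $Q$ acts as multiplication by $|\lambda|^2$ under $E$ and $|\lambda|^{2N}m_1(\lambda)$ is bounded for every $N$, the operator $Q^{N}m_1(L)$ is $L^2$-bounded for all $N$, and elliptic regularity for $Q$ then forces $m_1(L)$ to be of order $-\infty$. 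Hence it suffices to treat $m_0$, i.e.\ to assume $\widehat m$ supported in a small ball $B(0,\delta)$.

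Then comes the geometric-optics core. The function $\xi\mapsto U(\xi)u$ solves the first-order system $\partial_{\xi_j}(U(\xi)u)=i\overline{L_j}\,U(\xi)u$ with $U(0)=\mathrm{Id}$; symmetry makes the principal symbols $\sigma(L_j)$ real, and $L_iL_j=L_jL_i$ forces $\{\sigma(L_i),\sigma(L_j)\}=0$, so the Hamiltonian flows of the $\sigma(L_j)$ commute and compose to a well-defined $m$-parameter flow $\chi_\xi$ on $T^{*}M\setminus\{0\}$ with $\chi_0=\mathrm{id}$. For $\xi\in B(0,\delta)$ this flow stays $C^{\infty}$-close to the identity, so $U(\xi)$ admits a Fourier-integral parametrix
\[
\widetilde U(\xi)u(x)=(2\pi)^{-d}\int e^{i\psi(x,\eta,\xi)}\,a(x,\eta,\xi)\,\widehat u(\eta)\,d\eta,\qquad d=\dim M,
\]
where $\psi$ solves the eikonal system $\partial_{\xi_j}\psi=\sigma(L_j)(x,\nabla_x\psi)$ with $\psi|_{\xi=0}=\langle x,\eta\rangle$ (so $\psi=\langle x,\eta\rangle+\xi\cdot\sigma(L)(x,\eta)+O(|\xi|^2|\eta|)$) and $a\sim\sum_{k\ge0}a_{-k}$, $a_{-k}\in S^{-k}$, solves the attendant transport equations with $a(x,\eta,0)\equiv1$. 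Energy estimates show $U(\xi)-\widetilde U(\xi)$ is smoothing, uniformly for $|\xi|<\delta$.

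Finally I would assemble the symbol. Substituting the parametrix and performing the $\xi$-integration first yields, modulo smoothing,
\[
m(L)u(x)=(2\pi)^{-d}\!\int e^{i\langle x,\eta\rangle}p(x,\eta)\widehat u(\eta)\,d\eta,\quad p(x,\eta)=(2\pi)^{-m}\!\int_{B(0,\delta)}\!\widehat m(\xi)\,e^{i(\psi-\langle x,\eta\rangle)}a\,d\xi.
\]
To leading order $\psi-\langle x,\eta\rangle\approx\xi\cdot\sigma(L)(x,\eta)$ and $a\approx1$, so Fourier inversion gives $p(x,\eta)\approx m(\sigma(L)(x,\eta))$; the higher phase terms $O(|\xi|^2)$ and the amplitude corrections are handled by Taylor expanding in $\xi$ about $0$ and using $\int\xi^{\alpha}\widehat m(\xi)e^{i\xi\cdot s}d\xi=(-i\partial_s)^{\alpha}m(s)$ at $s=\sigma(L)(x,\eta)$. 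Since $\partial^{\alpha}m\in S^{b-|\alpha|}$ and $\sigma(L)\in S^1$, each correction is homogeneous of strictly lower order in $\eta$, producing $p\sim\sum_k p_{b-k}$ with $p_{b-k}\in S^{b-k}$ and $p_b(x,\eta)=m(\sigma(L_1)(x,\eta),\dots,\sigma(L_m)(x,\eta))$, whence $m(L)\in\Psi^{b}$ with the claimed principal symbol. The main obstacle is precisely this geometric-optics step together with the symbol bookkeeping that turns the formal leading term into genuine $S^{b}$ estimates to \emph{all} orders: one must solve the eikonal and transport systems for the whole $m$-parameter group, control the parametrix remainder uniformly in $\xi$, and—most delicately—justify interchanging the $\xi$- and $\eta$-integrations and differentiations so that the classical-symbol growth of $m$ (exactly the hypothesis imposed) propagates into uniform symbol bounds for $p$. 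By comparison, the self-adjoint reduction and the smoothing of the Schwartz part are routine.
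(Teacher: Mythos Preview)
The paper does not prove this theorem at all: it is quoted verbatim as a result of Strichartz (the theorem heading carries the citation \texttt{[Stri]}), and no argument for it is supplied anywhere in the text. The authors merely invoke it as a black box in the proof of Theorem~\ref{mainres} to conclude that $\psi(L_1,\tfrac{L_1-L_2}{2})$ is a pseudodifferential operator of order $0$ with principal symbol $\psi(\sigma(L_1),\tfrac{\sigma(L_1)-\sigma(L_2)}{2})$. So there is no ``paper's own proof'' to compare your proposal against.

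That said, what you have written is a faithful sketch of Strichartz's original strategy: pass to the joint spectral calculus via Nelson's theorem, write $m(L)$ as a superposition of the unitary group $U(\xi)=e^{i\xi\cdot L}$ against $\widehat m$, peel off a smoothing remainder by exploiting the rapid decay of $\widehat m$ away from the origin, and then build an FIO parametrix for $U(\xi)$ for small $\xi$ by solving eikonal and transport equations, finally reading off the symbol expansion of $m(L)$ from the Taylor expansion of the phase. Your identification of the delicate step---uniform control of the parametrix in $\xi$ and the bookkeeping that converts the symbol hypothesis on $m$ into genuine $S^{b-k}$ bounds for the full expansion of $p$---is accurate. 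If you want to turn this into a self-contained proof you would need to supply those estimates; for the purposes of the present paper, however, a citation suffices.
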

\section{Proof of the main result}
\begin{proof}[Proof of the Theorem \ref{mainres}]
We proceed similarly as in the proof of Theorem 1.9. in \cite{AWmicro}.

We identify tangent and cotangent bundles over the sphere by using the Riemannian metric inherited from $\R^{4n}$. At each $x \in \QS$ we consider orthogonal decomposition of $T_{x}\QS = V_{1}\oplus V_{2}$, where $V_{2} = \spann_{\R}\{ix, jx, kx\}$ and we introduce coordinates
\[
(\xi_{1}, \xi_{2}) \in T_{x}\QS \simeq V_{1}\times V_{2} \simeq \R^{4n-4}\times\R^{3}
\]
compatible with this decomposition.

We will prove that, under assumption \eqref{assumption} (from Theorem \ref{mainres}), at each $x\in \QS$, $\W_{x} \mu$ has a $(4n-4)$-dimensional gap given by $V_{1}$. To obtain this, let us consider two auxiliary operators
\[
L_{1} = \sqrt{\LB + (2n-1)^{2}Id}-(2n-1)Id,
\]
\[
L_{2} = \sqrt{Id+\Gamma}-Id.
\]
For $(x,\xi)\in T \QS$ we have
\[
\sigma(L_{1})(x,\xi) = |\xi|,
\]
\[
\sigma(L_{2})(x,\xi) = |\xi_{2}|.
\]
They clearly satisfy the assumptions of Theorem \ref{functional} with $b=0$. Moreover, on the space $\Hhm$ the eigenvalues of $L_{1}$ and $L_{2}$ are $h$ and $h-2m$, respectively.
Let $\psi \in C^{\infty}(\R^{2})$ be a function satisfying two conditions:
\begin{itemize}
\item $\psi$ is $0$-homogeneous outside some small neighbourhood of the origin, \\
\item $\psi$ is supported on $C(\epsilon)$ and $\psi \equiv 1$ on $C(\epsilon/2)$.
\end{itemize}
By the theorem of Strichartz (Theorem \ref{functional}), the operator $\psi(L_{1}, \frac{L_{1}-L_{2}}{2})$ is a pseudodifferential operator represented by
\[
L_{3} = \sum_{(h,m)\in I_{\QU}} \psi(h,m) \pi_{h,m} \quad \text{ in } \mathcal{D}'(\QS).
\]
In particular, $L_{3}\mu \in C^{\infty}(\QS)$, thus by Theorem \ref{mic}, we have
\begin{equation}\label{ell}
	\W\mu \subset \Char(L_{3}).
\end{equation}
Thus, it suffices to show that $V_{1}$ determines a $(4n-4)$-dimensional gap in $\Char(L_{3})$. Suppose that this is not true for some $x\in \QS$. This implies that there exists a sequence $(\xi^{j}) \subset T_{x}\QS$ such that
\[
\frac{|\xi_{2}^{j}|}{|\xi_{1}^{j}|} \to 0 \quad \text{as } j\to +\infty. 
\]
On the other hand, from \eqref{ell} we get that
\[
\psi\Big{(}|\xi^{j}|, \frac{|\xi^{j}|-|\xi_{2}^{j}|}{2}\Big{)} = 0,
\]
but
\[
\frac{|\xi^{j}|-|\xi_{2}^{j}|}{2|\xi^{j}|} = \frac{\sqrt{|\xi^{j}_{1}|^{2}+|\xi^{j}_{2}|^{2}}-|\xi_{2}^{j}|}{2\sqrt{|\xi^{j}_{1}|^{2}+|\xi^{j}_{2}|^{2}}} \to \frac{1}{2}\quad \text{as } j\to +\infty, 
\]
which contradicts with the choice of $\psi$.
\end{proof} 

\bibliography{biblio} 

\begin{thebibliography}{ACMM20}

\bibitem[ACMM20]{ACMM}
Julian Ahrens, Michael~G. Cowling, Alessio Martini, and Detlef M{\"u}ller.
\newblock Quaternionic spherical harmonics and a sharp multiplier theorem on
  quaternionic spheres.
\newblock {\em Math. Z.}, 294(3-4):1659--1686, 2020.

\bibitem[Ale85]{Ale}
A.~B. Aleksandrov.
\newblock Function theory in the ball.
\newblock In {\em Several complex variables II. Function theory in classical
  domains. Complex potential theory. Transl. from the Russian by P.M. Gauthier
  and J.R. King}, page~1. Berlin: Springer-Verlag, 1985.

\bibitem[AW22]{AWmicro}
Rami Ayoush and Micha{\l} Wojciechowski.
\newblock Microlocal approach to the {Hausdorff} dimension of measures.
\newblock {\em Adv. Math.}, 395:11, 2022.
\newblock Id/No 108088.

\bibitem[Bru89]{Bru}
R.~G.~M. Brummelhuis.
\newblock A microlocal {F}. and {M}. {Riesz} theorem with applications.
\newblock {\em Rev. Mat. Iberoam.}, 5(1-2):21--36, 1989.

\bibitem[For74]{For}
Frank Forelli.
\newblock Measures whose {Poisson} integrals are pluriharmonic.
\newblock {\em Ill. J. Math.}, 18:373--388, 1974.

\bibitem[H{\"o}r90]{Hor1}
Lars H{\"o}rmander.
\newblock {\em The analysis of linear partial differential operators. {I}.
  {Distribution} theory and {Fourier} analysis.}, volume 256 of {\em
  Grundlehren Math. Wiss.}
\newblock Berlin etc.: Springer-Verlag, 2nd ed. edition, 1990.

\bibitem[H{\"o}r07]{Hor3}
Lars H{\"o}rmander.
\newblock {\em The analysis of linear partial differential operators. {III}:
  {Pseudo}-differential operators}.
\newblock Class. Math. Berlin: Springer, reprint of the 1994 ed. edition, 2007.

\bibitem[Mey68]{Mey}
Y.~Meyer.
\newblock Spectres des mesures et mesures absolutement continues.
\newblock {\em Stud. Math.}, 30:87--99, 1968.

\bibitem[RW06]{RW}
Maria Roginskaya and Michal Wojciechowski.
\newblock Singularity of vector valued measures in terms of {Fourier}
  transform.
\newblock {\em J. Fourier Anal. Appl.}, 12(2):213--223, 2006.

\bibitem[Str72]{Stri}
Robert~S. Strichartz.
\newblock A functional calculus for elliptic pseudo-differential operators.
\newblock {\em Am. J. Math.}, 94:711--722, 1972.

\end{thebibliography}
\bibliographystyle{alpha}

\end{document}